\newcommand{\I}{\mathcal{I}}
\newcommand{\R}{\mathbb{R}}
\theoremstyle{plain}
\newtheorem{defi}{Definition}[section]
\newtheorem{teo}[defi]{Theorem}
\newtheorem{remark}[defi]{Remark}
\theoremstyle{definition}
\newtheorem{example}[defi]{Example}
\theoremstyle{remark}
\numberwithin{equation}{section}
\begin{document}

\title[]{Propagation of minima for nonlocal operators}

\author[]{Isabeau Birindelli}

\author[]{Giulio Galise}
\address{
Isabeau Birindelli and Giulio Galise:
Dipartimento di Matematica Guido Castelnuovo, Sapienza Universit\`a di Roma,
Piazzale Aldo Moro 5, Roma, ITALIA.
\newline \textit{Email address:} {\tt galise@mat.uniroma1.it, isabeau@mat.uniroma1.it}
}
\author[]{Hitoshi Ishii}
\address{
Hitoshi Ishii:
Institute for Mathematics and Computer Science, Tsuda University, Kodaira, Tokyo, Japan
\newline \textit{Email address:} {\tt hitoshi.ishii@waseda.jp}
}

\keywords{Nonlinear and degenerate integral operators, strong maximum principle, propagation of minima, positivity sets, viscosity solutions.\\
\indent 2010 {\it Mathematics Subject Classification.} 45M20, 35B50, 45G10, 35D40.\\
\indent I.B. and G.G were partially supported by INdAM-GNAMPA}


\begin{abstract} 
In this paper we state some sharp maximum principle, i.e. we characterize  the geometry of the sets of minima for supersolutions of equations involving the {\em $k$-th fractional truncated Laplacian} or the {\em $k$-th fractional eigenvalue}  which are fully nonlinear integral operators whose nonlocality is somehow $k$-dimensional. 
\end{abstract}

\maketitle
\section{Introduction}
Bony's sharp maximum principle \cite{Bo} describes the propagation of minima  for supersolutions of degenerate elliptic operators in all the directions that can be reached through the so called subunit vectors that characterize the operator.  In this paper we show that in the nonlocal context, for some fully nonlinear degenerate operators with nonhomogenous diffusion, this is even more true.

We shall investigate two families of operators  that we now introduce: given a direction $\xi\in\mathbb R^N$ and a function $u:\mathbb R^N\mapsto\mathbb R$, smooth enough, let
\begin{equation}\label{eq1}
\I_\xi u(x)=C_s\int \limits_{0}^{+\infty}\frac{u(x+\tau\xi)+u(x-\tau\xi)-2u(x)}{\tau^{1+2s}}\,d\tau
\end{equation}
where $s\in(0,1)$ and $C_s$ is a normalizing constant such that $\I_\xi u(x)\to\left\langle D^2u(x)\xi,\xi\right\rangle$ as $s\to1^-$.
The so called {\em $k$-th fractional truncated Laplacian} is defined by
\begin{equation}\label{eq2}
\I^-_ku(x)=\inf_{\left\{\xi_i\right\}_{i=1}^k\in{\mathcal V}_k}\sum_{i=1}^k\I_{\xi_i}u(x)
\end{equation}
where ${\mathcal V}_k$ denotes the family of $k$-dimensional orthonormal sets in $\mathbb R^N$, while the {\em $k$-th fractional eigenvalue}, is given by
\begin{equation}\label{eq3}
\I_ku(x)=\inf_{\dim V=k}\sup_{\xi\in V,\;|\xi|=1}\,\I_\xi u(x).
\end{equation}

As it was already pointed out in \cite{BGS,BGT}, these operators do not satisfy the strong maximum principle in the sense that there exists nonnegative supersolutions that reach their 
minimum. More precisely, if $\Omega$ is an open subset of $\mathbb R^N$ and $k<N$, then there exist nonnegative viscosity supersolutions $u\in LSC(\mathbb R^N)$ of 
\begin{equation}\label{eq7}
\I^-_ku=0\quad\text{in $\Omega$},
\end{equation}
or of
$$
\I_k u=0\quad\text{in $\Omega$},
$$
verifying the conditions
$$
\min_\Omega u=0\quad\text{and}\quad U\cap\Omega\neq\emptyset,
$$
where $U=\left\{x\in\R^N\,:\;u(x)>0\right\}$ is the positivity set of $u$.

The main purpose of this paper is  to give a characterization of $U$, for both classes of operators.

Before describing the results we obtain, we wish to mention that we are in the context of nonlocal viscosity solutions, classical references for the definition of viscosity solutions are \cite{BI,CS}; see also \cite{BGT} for a discussion on the notion of viscosity solutions for operators \eqref{eq2} and \eqref{eq3}. It is important to underline the fact that for those operators the nonlocal diffusion is along $k$ one dimensional sets.
Nonlocal operators with nonlocality in sub dimensional sets have been consider in other contests;
in particular Bass and Chen in \cite{BC}, using a probabilistic approach, prove some H\"older regularity results for solutions of equations involving operators that are sums of operators like \eqref{eq1} along $N$ orthogonal directions. Interestingly the regularity of the solution holds even if Harnack's inequality does not. Instead Endal, Ignat and Quiros  in \cite{EIQ} consider the heat equation for a very large class of diffusion terms that include nonlocal operators such as those of Bass and Chen.

We now describe the sharp maximum principle we obtain in the case of the nonlocal truncated Laplacians \eqref{eq2}.  We will prove in Theorem \ref{th1} that if $U$ is the positivity set of a supersolution of \eqref{eq7}, then $U$ satisfies the 
following geometric condition $G_{\Omega,U,k}$.
\begin{defi}
\begin{equation*}
\begin{split}
\text{$U$ satisfies $G_{\Omega,U,k}$} \Leftrightarrow&\;\Omega\backslash U\neq\emptyset\;\;\text{and}\;\;\forall x\in\Omega\backslash U\;\; \text{there exists} \;\left\{\xi_i\right\}_{i=1}^k\in{\mathcal V}_k\;\;\text{s.t}\\&\;x+\tau\xi_i\notin U\;\;\forall\tau\in\mathbb R,\;i=1,\ldots,k.
\end{split}
\end{equation*}
\end{defi}
Furthermore, the \lq\lq viceversa\rq\rq\ is true in the sense that for any $\Omega$ and $U$, open subsets of $\mathbb R^N$, satisfying $G_{\Omega,U,k}$ it is possible to construct a supersolution whose positivity set coincide with $U$, see Theorems \ref{th2} and \ref{th3}.
These two properties somehow reflects the fact that the diffusion is along $k$ orthogonal lines.

In order to give a better understanding of this geometric condition, we show in Theorem \ref{th4} that, if $\partial U$ is smooth, condition 
$G_{\Omega,U,k}$ implies that the sum of the largest $k$ principal curvatures at any point $x\in \Omega\cap\partial U$ is non negative.

In the case of the operator \eqref{eq2}, i.e. $k$-th fractional eigenvalue, the geometric condition of the positivity set is $\mathcal{G}_{\Omega,U,k}$.
\begin{defi}\label{def2}
\begin{equation*}
\begin{split}
\text{$U$ satisfies $\mathcal{G}_{\Omega,U,k}$} \Leftrightarrow&\;\Omega\backslash U\neq\emptyset\;\;\text{and}\;\;\forall x\in\Omega\backslash U\;\; \text{there exists $V$ linear space s.t.} \\
&\; (i)\;\;\dim V=k\\
&\; (ii)\;\; x+v\notin U\;\;\;\forall v\in V\,.
\end{split}
\end{equation*}
\end{defi}
Mutatis mutandis the results are the same as for $ \I^-_k$.
This condition, which is in fact stronger than $G_{\Omega,U,k}$ as it will be seen later, is different in nature because even though the operator is defined only through one dimensional integrals and so with  one dimensional diffusion, the sets involved in the propagation of the minimum are $k$-dimensional affine spaces.

Also, if $\partial U$ is smooth,  $\mathcal{G}_{\Omega,U,k}$ implies that the $(N-k)$-th principal curvature is nonnegative. Furthermore, when $k=N-1$, $\mathcal{G}_{\R^{N-1},U,N-1}$ implies that the connected components of $U$  are convex sets, see Theorems \ref{th8} and \ref{th9}. 

It is inevitable to compare the results that hold in the nonlocal case with those obtained for the differential operators to which these operators tend when $s$ goes to 1.
It is easy to see that, under reasonable conditions on the function $u$, $\I^-_ku$ converges pointwise to ${\mathcal P}^-_k(D^2u):=\lambda_1(D^2u)+\ldots+\lambda_k(D^2u)$, while $\I_ku$ converges to $\lambda_k(D^2u)$, where $\lambda_1(D^2u)\leq\lambda_2(D^2u)\leq\ldots\leq\lambda_N(D^2u)$ denote the eigenvalues of the Hessian $D^2u$ arranged in nondecreasing order. Among other things, for these very degenerate elliptic operators, some characterizations of the positivity set were given  in \cite{BGI}.

Remarkably, the conditions on the principal curvatures of $\partial U$, which are implied by $G_{\Omega,U,k}$ or by  ${\mathcal G}_{\Omega,U,k}$, are the same in the local case and in the nonlocal case. But differently from the local case, see \cite[Theorem 11]{BGI}, the statement of Theorem \ref{th9} cannot be reversed. There exist open sets, with convex connected components, for which ${\mathcal G}_{\R^N,U,N-1}$ is not true. A very simple example  is given by the union of disjoint open balls in $\R^N$.  

 With a different aim, but with a similar point of view, Del Pezzo, Quaas and Rossi in \cite{DQR} have introduced a notion of fractional convexity and they define the $s$-convex envelope through ${\mathcal I}_1u$, in similarity with the result of Oberman and Silvestre \cite{OS}.

\section{The case of $\I^-_k$}
Let $U$ and $\Omega$ be open subsets of $\R^N$. Since $G_{\Omega,U,k}$ is trivial  if $U=\emptyset$, henceforth we shall assume that $U$ is nonempty.

\begin{teo}\label{th1}
Assume that $u\in LSC(\R^N)$ is a bounded and nonnegative (in $\R^N$) viscosity supersolution of 
$$
\I^-_ku=0\quad\text{in $\Omega$}
$$
such that $\displaystyle\min_\Omega u=0$. Then its positivity set $U=\left\{x\in\R^N\,:\;u(x)>0\right\}$ satisfies $G_{\Omega,U,k}$.
\end{teo}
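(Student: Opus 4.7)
The plan is to work pointwise at any $x_0 \in \Omega \setminus U$, test the viscosity supersolution condition against the constant function $\varphi \equiv 0$ from below, and exploit the nonnegativity of $u$ to force every single-direction integral to vanish along an orthonormal frame produced via a compactness argument.

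Since $u \geq 0$ and $\min_\Omega u = 0$, the set $\Omega \setminus U = \{u = 0\} \cap \Omega$ is nonempty, which establishes the first half of $G_{\Omega,U,k}$. For the second half, fix $x_0 \in \Omega \setminus U$ and pick $\delta > 0$ with $\overline{B_\delta(x_0)} \subset \Omega$. The constant $\varphi \equiv 0$ is a valid test function from below at $x_0$, so the viscosity supersolution inequality reads
\begin{equation*}
0 \;\geq\; \I^-_k[u_\delta](x_0) \;=\; \inf_{\{\xi_i\}_{i=1}^k \in \mathcal{V}_k} \sum_{i=1}^k \I_{\xi_i} u_\delta(x_0),
\end{equation*}
where $u_\delta$ equals $0$ on $B_\delta(x_0)$ and $u$ outside. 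Because $u_\delta \geq 0$ and $u_\delta(x_0) = 0$, each single-direction integral reduces to
\begin{equation*}
\I_\xi u_\delta(x_0) \;=\; C_s \int_\delta^{+\infty} \frac{u(x_0 + \tau \xi) + u(x_0 - \tau \xi)}{\tau^{1+2s}}\, d\tau \;\geq\; 0,
\end{equation*}
so the infimum above is in fact equal to $0$.

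The key technical step is the attainment of this infimum. I would apply Fatou's lemma to the nonnegative integrand, using LSC of $u$, to conclude that $\xi \mapsto \I_\xi u_\delta(x_0)$ is lower semicontinuous on the unit sphere. Since $\mathcal{V}_k$ is compact, the LSC sum over orthonormal $k$-frames attains its infimum, giving a frame $\{\xi_i^\delta\}_{i=1}^k$ with each $\I_{\xi_i^\delta} u_\delta(x_0) = 0$. The vanishing of a nonnegative integrand combined with LSC and nonnegativity of $u$ then forces $u(x_0 \pm \tau \xi_i^\delta) = 0$ for every $\tau > \delta$ and every $i$.

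Finally, I would let $\delta_n \downarrow 0$ and, by compactness of $\mathcal{V}_k$, extract a subsequence along which $\{\xi_i^{\delta_n}\}_i \to \{\xi_i^\ast\}_i \in \mathcal{V}_k$. For any fixed $\tau \neq 0$ one has $|\tau| > \delta_n$ eventually, hence $u(x_0 + \tau \xi_i^{\delta_n}) = 0$; LSC of $u$ together with $u \geq 0$ then gives $u(x_0 + \tau \xi_i^\ast) = 0$. Combined with $u(x_0) = 0$, this places the entire line $x_0 + \R \xi_i^\ast$ in $U^c$, proving $G_{\Omega,U,k}$. The main obstacle is the attainment of the infimum over orthonormal frames, resolved via Fatou and compactness of $\mathcal{V}_k$; the remaining work is a careful limit passage in a lower semicontinuous setting.
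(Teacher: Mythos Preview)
Your argument is correct and follows essentially the same strategy as the paper: test by the zero function on a shrinking ball, use compactness of $\mathcal{V}_k$ together with Fatou and the lower semicontinuity of $u$ to produce a limiting orthonormal frame, and conclude from nonnegativity that $u$ vanishes along the corresponding lines. The only cosmetic difference is that the paper merges the two limits into one sequence---for each $n$ it picks a frame realizing the infimum up to $1/n$ with ball radius $1/n$, and applies Fatou once at the end---whereas you first argue exact attainment of the infimum at each fixed $\delta$ (via lower semicontinuity of $\xi\mapsto \I_\xi u_\delta(x_0)$) and then send $\delta\to 0$; both routes are equivalent.
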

\begin{proof}
Let $x_0\in\Omega\backslash U$. Then $u(x_0)=0$. We test $u$ from below at $x_0$ by
\begin{equation}\label{24mageq2}
\psi_n(x)=\begin{cases}
0 & \text{if $|x-x_0|<\frac1n$}\\
u(x) & \text{otherwise}.\end{cases}
\end{equation}
From the inequality
$$
\I^-_k\psi_n(x_0)\leq0
$$
we infer that, for any $n\in\mathbb N$, there exists an orthonormal frame $\left\{\xi_1(n),\ldots,\xi_k(n)\right\}$ such that 
\begin{equation}\label{eq4}
C_s\sum_{i=1}^k\int\limits_{0}^{+\infty}\frac{\left[u(x_0+\tau\xi_i(n))+u(x_0-\tau\xi_i(n))\right]\chi_{\left(\frac1n,+\infty\right)}(\tau)}{\tau^{1+2s}}\,d\tau\leq\frac1n\,,
\end{equation}
where $\chi_{\left(\frac1n,+\infty\right)}$ denotes the characteristic function of the interval $\left(\frac1n,+\infty\right)$.\\
Up to a subsequence we can further assume that 
$$
\lim_{n\to+\infty}\xi_i(n)=\bar\xi_i\quad\text{for $i=1,\ldots,k$}
$$
and that $\left\{\bar\xi_1,\ldots,\bar\xi_k\right\}\in{\mathcal V}_k$.\\
Using  Fatou's lemma in \eqref{eq4} we obtain
\begin{equation}\label{eq5}
\sum_{i=1}^k\int\limits_{0}^{+\infty}\liminf_{n\to+\infty}\frac{\left[u(x_0+\tau\xi_i(n))+u(x_0-\tau\xi_i(n))\right]\chi_{\left(\frac1n,+\infty\right)}(\tau)}{\tau^{1+2s}}\,d\tau\leq0.
\end{equation}
Since for any $\tau>0$ we have $\chi_{\left(\frac1n,+\infty\right)}(\tau)\to\chi_{\left(0,+\infty\right)}(\tau)$ as $n\to+\infty$   and 
$$
u(x_0\pm\tau\bar\xi_i)\leq\liminf_{n\to+\infty}u(x_0\pm\tau\xi_i(n)),
$$
by lower semincontinuity, then we infer from \eqref{eq5} that
\begin{equation}\label{eq6}
\sum_{i=1}^k\int\limits_{0}^{+\infty}\frac{u(x_0+\tau\bar\xi_i)+u(x_0-\tau\bar\xi_i)}{\tau^{1+2s}}\,d\tau\leq0.
\end{equation}
Moreover $u\geq0$ in $\mathbb R^N$ by assumption, hence by \eqref{eq6} we conclude that
  $$u(x_0+\tau\bar\xi_i)=0\quad\forall \tau\in\R,\;i=1,\ldots,k$$ 
that is $x_0+\tau\bar\xi_i\notin U$ for any $\tau\in\R$ and any $i=1,\ldots,k$.
\end{proof}
\begin{remark}{\rm  Theorem \ref{th1} excludes the existence of bounded supersolutions $u\in LSC(\R^N)$ of $$\I^-_ku=0 \quad \text{in $\Omega$}$$ such that $$\min_\Omega u=0\quad\;\text{and}\quad\;u>0\;\;\text{in $\R^N\backslash\Omega$}.$$}

\end{remark}

Conversely, any open subset $U$ satisfying $G_{\Omega,U,k}$ coincide with the positivity set of a supersolution $u$ of the equation \eqref{eq7}.  

\begin{teo}\label{th2}
Assume that $U$ is an open set satisfying  $G_{\Omega,U,k}$. Then there exists a nonnegative and bounded (in $\R^N$) supersolution $u\in LSC(\R^N)$ of 
$$
\I^-_ku=0\quad\text{in $\Omega$}
$$
such that 
$$
\min_\Omega u=0\qquad\text{and}\qquad U=\left\{x\in\R^N\,:\;u(x)>0\right\}.
$$
\end{teo}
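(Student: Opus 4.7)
The plan is to take the simplest possible candidate, namely the indicator function $u := \chi_U$, and to check that it meets every requirement. Because $U$ is open and nonempty, $\chi_U$ is $LSC(\R^N)$, nonnegative, bounded (in fact with values in $\{0,1\}$), and its positivity set is exactly $U$. Since $G_{\Omega,U,k}$ forces $\Omega \setminus U \neq \emptyset$, we have $\min_\Omega u = 0$. Thus the only content is to verify that $u$ is a viscosity supersolution of $\I^-_k u = 0$ in $\Omega$. I fix $x_0 \in \Omega$ and a $C^2$ test function $\varphi$ touching $u$ from below at $x_0$, and form the standard truncation $v_\delta := \varphi$ on $B_\delta(x_0)$, $v_\delta := u$ elsewhere, with $\delta>0$ small, then split each single-direction integral $\I_{\xi}v_\delta(x_0)$ into its local piece on $(0,\delta)$ (where $v_\delta=\varphi$) and its nonlocal piece on $(\delta,\infty)$ (where $v_\delta=u=\chi_U$).

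In the case $x_0 \in U$, openness gives $B_\delta(x_0)\subset U$ for $\delta$ small, so $u\equiv 1$ near $x_0$, $\varphi(x_0)=1$, and $\varphi \le 1$ on $B_\delta(x_0)$. Hence for \emph{every} orthonormal set $\{\xi_i\}_{i=1}^k\in\mathcal V_k$ both contributions to $\I_{\xi_i}v_\delta(x_0)$ are nonpositive: the local piece because $\varphi(x_0\pm\tau\xi_i)-2\varphi(x_0)\le 0$, and the nonlocal piece because $u(x_0\pm\tau\xi_i)-2u(x_0)\le -1\cdot(\text{stuff})\le 0$ since $u\le 1 = u(x_0)$. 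Summing yields $\sum_{i=1}^k \I_{\xi_i}v_\delta(x_0)\le 0$, and taking the infimum gives $\I^-_k v_\delta(x_0)\le 0$.

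The more interesting case is $x_0 \in \Omega\setminus U$. Here $u(x_0)=0$, and $G_{\Omega,U,k}$ furnishes an orthonormal frame $\{\bar\xi_i\}_{i=1}^k\in\mathcal V_k$ along which $u$ vanishes identically: $u(x_0+\tau\bar\xi_i)=0$ for every $\tau\in\R$ and $i=1,\dots,k$. For this distinguished frame, the nonlocal piece $\int_\delta^\infty\tau^{-1-2s}\bigl[u(x_0+\tau\bar\xi_i)+u(x_0-\tau\bar\xi_i)-2u(x_0)\bigr]\,d\tau$ is identically zero. The local piece is nonpositive because $\varphi\le u$ on $B_\delta(x_0)$ forces $\varphi(x_0\pm\tau\bar\xi_i)\le u(x_0\pm\tau\bar\xi_i)=0$ for $|\tau|<\delta$, and $\varphi(x_0)=0$. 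Therefore $\sum_{i=1}^k\I_{\bar\xi_i}v_\delta(x_0)\le 0$, which already suffices to conclude $\I^-_k v_\delta(x_0)\le 0$ since the $k$-th fractional truncated Laplacian is an infimum over $\mathcal V_k$.

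The main obstacle is not conceptual but bookkeeping: one must write the viscosity inequality in a form suitable for a rough subsolution-free test (the standard $(\varphi,u)$-splitting for nonlocal operators, as in \cite{BGT}), check that the local integrals on $(0,\delta)$ converge for $C^2$ test functions, and verify that exhibiting a single frame realizing nonpositivity is enough to bound the infimum. Everything else is a direct consequence of the ``zero corridors'' provided by $G_{\Omega,U,k}$, which is precisely the geometric content that makes $\chi_U$ solve the inequality in the sharpest way.
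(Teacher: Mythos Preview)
Your proposal is correct and follows essentially the same approach as the paper's proof: take $u=\chi_U$, observe the easy properties (LSC, bounded, positivity set, $\min_\Omega u=0$), and verify the viscosity supersolution inequality by splitting into the cases $x_0\in\Omega\cap U$ and $x_0\in\Omega\setminus U$, using in the latter case the orthonormal frame supplied by $G_{\Omega,U,k}$ to get vanishing nonlocal tails and nonpositive local contribution. The only cosmetic difference is that for $x_0\in\Omega\cap U$ the paper computes $\I_\xi u(x_0)$ classically (since $u$ is locally constant there), whereas you carry the test-function splitting through both cases; this is harmless.
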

\begin{proof}
Let us define $u\in LSC(\R^N)$ by
$$
u(x)=\chi_U(x).
$$
It is clear that $U=\left\{x\in\R^N\,:\;u(x)>0\right\}$. Moreover, since $\Omega\backslash U\neq\emptyset$ by definition of $G_{\Omega,U,k}$, it holds that $\displaystyle\min_\Omega u=0$. \\
To prove that $u$ is a viscosity supersolution, let $x_0\in\Omega$. If $x_0\in \Omega\cap U$, then $u(x_0)=1$.  Since $U$ is open, the function $u$ is in fact constant in a neighborhood of $x_0$. Hence, for any $\xi \in\R^N$, with $|\xi|=1$,  $\I_\xi u(x_0)$ is well defined and 
$$
\I_\xi u(x_0)=C_s\int \limits_{0}^{+\infty}\frac{u(x+\tau\xi)+u(x-\tau\xi)-2}{\tau^{1+2s}}\,d\tau\leq0
$$ 
considering that  $0\leq u(x)\leq1$ for any $x\in\R^N$. Then we infer that the inequality
$$
\I^-_ku(x_0)\leq0
$$
holds in classical, and so in the viscosity, sense at $x_0\in \Omega\cap U$. \\
Assume now that $x_0\in\Omega\backslash U$, so that $u(x_0)=0$. In this case $u$ can be discontinuous at $x_0$ depending on whether $x_0\in\Omega\cap\partial U$ or not.\\
 To check that the inequality $\I^-_ku(x_0)\leq0$ holds in the viscosity sense, let $\varphi\in C^2\left(\overline{B_\delta(x_0)}\right)$, $\delta>0$, be such that 
\begin{equation}\label{eq8}
\varphi(x_0)=0\qquad\text{and}\qquad\varphi (x)\leq u(x)\;\;\forall x\in B_\delta(x_0).
\end{equation}
Consider the function
$$
\psi(x)=\begin{cases}
\varphi(x) & \text{if $x\in B_\delta(x_0)$}\\
u(x) & \text{otherwise}.
\end{cases}
$$
Using the hypothesis $G_{\Omega,U,k}$, there exists an orthonormal frame $\left\{\bar\xi_1,\ldots,\bar\xi_k\right\}$, depending on $x_0$, such that 
\begin{equation}\label{eq9}
u(x_0+\tau\bar\xi_i)=0\quad\forall \tau\in\R,\;i=1,\ldots,k.
\end{equation}
Thus, by \eqref{eq8}-\eqref{eq9}, we conclude 
\begin{equation*}
\I^-_k\psi(x_0)\leq\sum_{i=1}^k\I_{\bar\xi_i}\psi(x_0)
=C_s \sum_{i=1}^k\int \limits_{0}^{\delta}\frac{\varphi(x+\tau\bar\xi_i)+\varphi(x-\tau\bar\xi_i)}{\tau^{1+2s}}\,d\tau\leq0\,.
\end{equation*}
\end{proof}

\begin{remark}{\rm
As a consequence of Theorem \ref{th1} and of the proof of Theorem \ref{th2},  if $u\in LSC(\R^N)$ is a nonnegative and bounded (in $\R^N$) viscosity supersolution of $$\I^-_ku\leq0\quad\text{in $\Omega$,}$$ then the characteristic function $\chi_U$ of its positivity set $U=\left\{x\in\R^N\,:\;u(x)>0\right\}$ is in turn viscosity supersolution of the same equation, that is 
$$
\I^-_k\chi_U\leq0\quad\text{in $\Omega$.}
$$ }
\end{remark}

Next Theorem  provides the existence of a Lipschitz continuous  entire supersolution with a prescribed positivity set $U$ satisfying the property $G_{\R^N,U,k}$.

\begin{teo}\label{th3}
Assume that $U$ is an open and bounded subset of $\R^N$ that satisfies $G_{\R^N,U,k}$. Then there exists a nonnegative and bounded viscosity supersolution $u\in\mathrm{Lip}(\R^N)$ of 
\begin{equation}\label{20mageq1}
\I^-_ku=0\quad\text{in $\R^N$}
\end{equation}
such that $U=\left\{x\in\R^N\,:\;u(x)>0\right\}$.
\end{teo}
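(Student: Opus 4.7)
The plan is to Lipschitz-regularize the discontinuous supersolution $\chi_U$ from Theorem \ref{th2} (applied with $\Omega=\R^N$) by taking its inf-convolution with the Euclidean norm:
$$
u(x):=\inf_{y\in\R^N}\bigl\{\chi_U(y)+|x-y|\bigr\}.
$$
Such a $u$ is automatically $1$-Lipschitz. Splitting the infimum according to whether $y\in U$ or $y\in\R^N\setminus U$ yields the explicit formula $u(x)=\min\bigl(1,\,d(x,\R^N\setminus U)\bigr)$, where $d$ denotes Euclidean distance; hence $u$ is nonnegative, bounded in $[0,1]$, with $\{u>0\}=U$ as required.

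To verify the supersolution property, I would exhibit $u$ as a pointwise infimum of viscosity supersolutions. Setting $z:=x-y$ gives $u=\inf_{z\in\R^N}\phi_z$ where $\phi_z(x):=\chi_U(x-z)+|z|=\chi_{U+z}(x)+|z|$. The geometric condition $G_{\R^N,U,k}$ is translation invariant, so $U+z$ satisfies $G_{\R^N,U+z,k}$; Theorem \ref{th2} then gives that each $\chi_{U+z}$ is a viscosity supersolution of $\I^-_k u=0$ in $\R^N$, and adding the constant $|z|$ preserves this since $\I^-_k$ annihilates constants. The standard stability of viscosity supersolutions under pointwise infima, which holds also for the nonlocal operator $\I^-_k$ (cf.\ \cite{BI,CS,BGT}), then yields that $u$ itself satisfies $\I^-_ku\leq 0$ viscositywise on $\R^N$.

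As a hands-on alternative to invoking stability as a black box, one can use that the infimum defining $u(x_0)$ is always attained at some $y^*=y^*(x_0)$: take $y^*=x_0$ when $x_0\notin U$ or $d(x_0,\R^N\setminus U)\geq 1$, and $y^*$ a nearest point of $\R^N\setminus U$ to $x_0$ otherwise. With $z^*:=x_0-y^*$ one has $\phi_{z^*}(x_0)=u(x_0)$ and $\phi_{z^*}\geq u$ globally by definition of the infimum. Any $C^2$ test function $\varphi$ touching $u$ from below at $x_0$ therefore also touches $\phi_{z^*}$ from below at $x_0$; in the inner--outer integral representation of $\I^-_k$ used viscositywise, the inner integrand involves only $\varphi$ (so is identical for $u$ and $\phi_{z^*}$), while the outer integrand for $u$ is pointwise no larger than that for $\phi_{z^*}$. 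Hence the supersolution inequality for $\phi_{z^*}$, guaranteed by Theorem \ref{th2}, transfers directly to $u$. The main obstacle is precisely this transfer step at points $x_0\in U$ with $0<d(x_0,\R^N\setminus U)<1$, where $u$ is genuinely non-constant near $x_0$; the case $x_0\in\R^N\setminus U$ falls under the same argument as in the proof of Theorem \ref{th2}, since condition $G_{\R^N,U,k}$ provides $k$ orthonormal directions along which both $u$ and $\phi_{z^*}=\chi_U$ vanish identically.
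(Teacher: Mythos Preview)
Your candidate function $u=\min\{1,\operatorname{dist}(\cdot,\R^N\setminus U)\}$ coincides with the paper's choice $\operatorname{dist}(\cdot,\R^N\setminus U)$ on bounded $U$ up to the harmless truncation that the paper itself mentions in Remark~\ref{rmk2}, and your hands-on transfer argument is correct: since $u\leq\phi_{z^*}$ with equality at $x_0$, the outer integrand for $u$ is dominated by that for $\phi_{z^*}$, and monotonicity of $\I^-_k$ in this sense pushes the inequality through.

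The packaging, however, is genuinely different. The paper works directly with the distance function: at $x_0$ it selects a nearest point $y_0\in\R^N\setminus U$, applies $G_{\R^N,U,k}$ at $y_0$ to produce directions $\bar\xi_i$, then uses $\varphi(x)\leq u(x)\leq |x-y|$ for $y\in\R^N\setminus U$ to control both the inner integral (via the translated test function $\phi(y)=\varphi(y+x_0-y_0)$ having a maximum on $\R^N\setminus U$ at $y_0$) and the outer integral (via $u(x_0\pm\tau\bar\xi_i)\leq|x_0-y_0|=u(x_0)$). You instead recognize $u$ as the inf-convolution of $\chi_U$ with $|\cdot|$, write it as $\inf_z\phi_z$ with each $\phi_z$ a supersolution by translation invariance of $G_{\R^N,U,k}$ and Theorem~\ref{th2}, and then invoke stability under infima. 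Your route is more modular---it recycles Theorem~\ref{th2} wholesale and would apply to inf-convolutions of any supersolution family---while the paper's is self-contained and makes the role of the nearest point $y_0$ explicit. Unwinding your $z^*=x_0-y^*$, the two arguments are the same computation viewed from different angles: applying the geometric condition at $y^*$ is exactly applying it to $U+z^*$ at $x_0$.
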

\begin{proof}
We define
$$
u(x)=\text{dist}(x,\R^N\backslash U).
$$
Clearly $u$ is bounded and Lipschitz in $\R^N$. Moreover, since $U$ is open, $u(x)>0$ if, and only if, $x\in U$.\\
To check that $u$ is a viscosity supersolution of \eqref{20mageq1}, let $x_0\in\R^N$ and let $\varphi\in C^2\left(\overline{B_\delta(x_0)}\right)$, $\delta>0$, be such that 
\begin{equation}\label{20mageq2}
u(x_0)-\varphi(x_0)=0\leq u(x)-\varphi(x)\qquad\forall x\in B_\delta(x_0).
\end{equation}
Setting
$$
\psi(x)=\begin{cases}
\varphi(x) & \text{if $x\in B_\delta(x_0)$}\\
u(x) & \text{otherwise},
\end{cases}
$$
we have to prove that $\I^-_k\psi(x_0)\leq0$.\\
We choose $y_0\in\R^N\backslash U$, depending on $x_0$, such that
\begin{equation}\label{20mageq3}
\varphi(x_0)=u(x_0)=|x_0-y_0|\,.
\end{equation}
Moreover, by the definition of $u$, we have
\begin{equation}\label{20mageq4}
\varphi(x)\leq u(x)\leq |x-y|\quad\forall x\in B_\delta(x_0),\;y\in\R^N\backslash U. 
\end{equation}
Using \eqref{20mageq3}-\eqref{20mageq4} with $x=y+x_0-y_0$ and setting $\phi(y)=\varphi(y+x_0-y_0)$, we then obtain 
\begin{equation}\label{20mageq5}
\phi(y)\leq \phi(y_0)\quad\forall y\in B_\delta(y_0)\cap(\R^N\backslash U). 
\end{equation}
Since $y_0\in\R^N\backslash U$, by the assumption $G_{\R^N,U,k}$ there exists an orthonormal frame $\left\{\bar\xi_1,\ldots,\bar\xi_k\right\}$ such that 
\begin{equation}\label{20mageq6}
y_0+\tau\bar\xi_i\notin U\;\;\;\forall\tau\in\R,\;i=1,\ldots,k.
\end{equation}
Then, from \eqref{20mageq5}-\eqref{20mageq6}, we have 
$$
\phi(y_0+\tau\bar\xi_i)+\phi(y_0-\tau\bar\xi_i)-2\phi(y_0)\leq0\quad\forall\tau\in[0,\delta),\;i=1,\ldots,k.
$$
The above inequality implies that 
$$
\sum_{i=1}^k\int \limits_{0}^{\delta}\frac{\phi(y_0+\tau\bar\xi_i)+\phi(y_0-\tau\bar\xi_i)-2\phi(y_0)}{\tau^{1+2s}}\,d\tau\leq0\,.
$$
Moreover, since $\phi(y_0\pm \tau\bar\xi_i)=\varphi(x_0\pm\tau\bar\xi_i)$ for $\tau\in[0,\delta)$ and $i=1,\ldots,k$, we obtain
\begin{equation}\label{20mageq7}
\sum_{i=1}^k\int \limits_{0}^{\delta}\frac{\varphi(x_0+\tau\bar\xi_i)+\varphi(x_0-\tau\bar\xi_i)-2\varphi(x_0)}{\tau^{1+2s}}\,d\tau\leq0\,.
\end{equation}
Now we use the inequality
$$
u(x)\leq |x-y|\quad\forall x\in\R^N,\;y\in\R^N\backslash U
$$
with the particular choice $x=x_0\pm\tau\bar\xi_i$ and $y=y_0\pm\tau\bar\xi_i$ to infer that 
$$
u(x_0\pm\tau\bar\xi_i)\leq|x_0-y_0|=u(x_0) \quad\forall \tau\in[0,+\infty),\;i=1,\ldots,k\,.
$$
Thus 
\begin{equation}\label{20mageq8}
\sum_{i=1}^k\int \limits_{\delta}^{+\infty}\frac{u(x_0+\tau\bar\xi_i)+u(x_0-\tau\bar\xi_i)-2u(x_0)}{\tau^{1+2s}}\,d\tau\leq0\,.
\end{equation}
The conclusion $$\I^-_k\psi(x_0)\leq0$$ easily follows from \eqref{20mageq7}-\eqref{20mageq8}.
\end{proof}

\begin{remark}\label{rmk2}
{\rm
The assumption $U$ bounded in the statement of Theorem \ref{th3} has been used only to guarantee that $u(x)=\text{dist}(x,\R^N\backslash U)$ was bounded. In this way the maps
\begin{equation}\label{26mageq1}
\tau\mapsto\frac{u(x+\tau\xi)+u(x-\tau\xi)-2u(x)}{\tau^{1+2s}}
\end{equation}
are integrable outside the origin, for any direction $\xi$ and any $x\in\R^N$.\\
In the case $U$  unbounded, using the Lipschitz continuity of $u$, the integrability of \eqref{26mageq1} far away from the origin, is still true, independently of $\xi$, provided $s>\frac12$.\\
Another possibility to deal with general $U$ and without the restriction $s\in(\frac12,1)$, is to replace $u(x)=\text{dist}(x,\R^N\backslash U)$ by
$$
u(x)=\min\left\{\text{dist}(x,\R^N\backslash U),1\right\}.
$$
The details are left to the reader.
}
\end{remark}

Let $u$ be a viscosity supersolution of 
\begin{equation}\label{23mageq1}
\I^-_ku=0\quad\text{in $\Omega$}
\end{equation}
and suppose that the relative boundary $\Omega\cap\partial U$ of its positivity set $U$ is smooth. Then, denoting with
$$
\kappa_1(x)\leq\ldots\leq\kappa_{N-1}(x)
$$
the principal curvatures of $\Omega\cap\partial U$ at $x$, one has 
\begin{equation*}
\kappa_{N-k}(x)+\ldots+\kappa_{N-1}(x)\geq0 \quad\forall x\in\Omega\cap\partial U.
\end{equation*}
This geometric property is a consequence of Theorem \ref{th1} and the following

\begin{teo}\label{th4}
Let $U$ be an open set verifying $G_{\Omega,U,k}$ and assume that $\Omega\cap\partial U$ 
is a $C^2$-hypersurface. 
Then
\begin{equation}\label{23mageq2}
\sum_{i=1}^k\kappa_{N-i}(x)\geq0 \quad\forall x\in\Omega\cap\partial U.
\end{equation}
\end{teo}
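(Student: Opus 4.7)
The plan is to translate $G_{\Omega,U,k}$ into a pointwise inequality on the second fundamental form of $\Omega\cap\partial U$, and then extract the curvature bound from the Ky Fan variational characterization of the partial sum of the smallest eigenvalues of a symmetric matrix.

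Fix $x_0\in\Omega\cap\partial U$. Since $U$ is open, $x_0\in\Omega\setminus U$, so by $G_{\Omega,U,k}$ there is an orthonormal set $\{\xi_1,\ldots,\xi_k\}\in{\mathcal V}_k$ such that the entire lines $\{x_0+\tau\xi_i:\tau\in\R\}$ miss $U$. Choose coordinates with $x_0$ at the origin and outward unit normal to $U$ at $x_0$ equal to $e_N$, so that in a neighborhood of $0$ one has $\partial U=\{y_N=f(y')\}$ and $U=\{y_N<f(y')\}$, with $f\in C^2$, $f(0)=0$, $\nabla f(0)=0$. Writing $\xi_i=(\xi_i',a_i)\in\R^{N-1}\times\R$, the point $x_0+\tau\xi_i$ lies in $U$ exactly when $\tau a_i<f(\tau\xi_i')$. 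Since $f(\tau\xi_i')=O(\tau^2)$, if $a_i\ne 0$ then choosing $\tau$ of small magnitude and appropriate sign forces $\tau a_i<f(\tau\xi_i')$, contradicting the line constraint. Hence $a_i=0$ for every $i$, i.e.\ each $\xi_i$ is tangent to $\partial U$ at $x_0$.

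With the $\xi_i$ tangent, the constraint becomes $f(\tau\xi_i)\le 0$ for all small $|\tau|$, which by Taylor expansion at the origin yields $\langle D^2 f(0)\xi_i,\xi_i\rangle\le 0$ for $i=1,\ldots,k$. The Ky Fan min-max principle then gives
\[
\sum_{i=1}^k\lambda_i(D^2 f(0))=\min_{\{\eta_1,\ldots,\eta_k\}\in{\mathcal V}_k}\sum_{i=1}^k\langle D^2 f(0)\eta_i,\eta_i\rangle\le\sum_{i=1}^k\langle D^2 f(0)\xi_i,\xi_i\rangle\le 0,
\]
where $\lambda_1\le\cdots\le\lambda_{N-1}$ are the eigenvalues of $D^2 f(0)$ in nondecreasing order. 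The principal curvatures of $\partial U$ at $x_0$ with respect to the outward normal $e_N$ are the eigenvalues of $-D^2 f(0)$, so in the ordering $\kappa_1\le\cdots\le\kappa_{N-1}$ one has $\kappa_{j}=-\lambda_{N-j}(D^2 f(0))$, and the previous inequality rewrites exactly as $\sum_{i=1}^k\kappa_{N-i}(x_0)\ge 0$.

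The only delicate point is the tangency step; once $\xi_i\in T_{x_0}\partial U$ is established, the remainder is Taylor's theorem combined with the min-max identity for sums of eigenvalues, both entirely standard. The integral operator $\I^-_k$ plays no direct role in this argument, since all of its influence has already been absorbed into the geometric condition $G_{\Omega,U,k}$ via Theorem~\ref{th1}.
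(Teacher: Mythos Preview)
Your proof is correct and follows essentially the same route as the paper's: establish tangency of the $\xi_i$ from $f=o(\tau)$, read off a sign on the quadratic form $\langle D^2f(0')\,\cdot,\cdot\rangle$ along these directions via second-order Taylor, and conclude using the variational (Ky Fan) characterization of partial eigenvalue sums. The only differences are cosmetic---you parametrize $U$ as the subgraph with outward normal $e_N$ rather than the supergraph, which flips all signs and turns the paper's $\sup$ into your $\min$; just note that once $a_i=0$ you should write $\xi_i'$ rather than $\xi_i$ when plugging into $f$ and $D^2f(0)$, and the minimum in your Ky Fan identity must run over orthonormal $k$-frames in $\R^{N-1}$ (the tangent space), not in $\R^N$.
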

\begin{proof}
Let $x_0\in\Omega\cap\partial U$. By assumption, and by an orthogonal transformation, we may assume that $x_0=0$ and that  for some $r>0$
\begin{equation}\label{23mageq4}
U\cap B_r=\left\{x\in B_r\,:\;x_N>f(x')\right\}, 
\end{equation}
where $x=(x',x_N)$ and 
\begin{equation}\label{24mageq1}
f\in C^2(B_r(x')),\quad f(0')=0, \quad Df(0')=0'.
\end{equation}
 Moreover the principal curvatures of $\Omega\cap\partial U$ at $x_0=0$ are the eigenvalues of $D^2f(0')$.\\
Since $G_{\Omega,U,k}$ holds and $x_0=0\in\Omega\backslash U$, then there exists $\left\{\bar\xi_i\right\}_{i=1}^k\in{\mathcal V}_k$ such that 
\begin{equation}\label{23mageq3}
\tau\bar\xi_i\notin U\;\;\quad\forall\tau\in\mathbb R,\;i=1,\ldots,k.
\end{equation}
We claim that $\left\langle \bar\xi_i,e_N\right\rangle=0$ for any $i=1,\ldots,k$. \\ If not, then $\left\langle \bar\xi_i,e_N\right\rangle\neq0$ for some $i\in\left\{1,\ldots,k\right\}$. Replacing $\bar\xi_i$ with $-\bar\xi_i$ if necessary, we can further suppose that $\left\langle \bar\xi_i,e_N\right\rangle>0$. Since
$$
f\left(\tau\bar\xi'_i\right)=o(\tau)\quad\text{as $\tau\to0$, }
$$
we infer that for any $\tau$ positive and small enough
$$
\tau\left\langle \bar\xi_i,e_N\right\rangle> f\left(\tau\,\bar\xi'_i\right)\,.
$$
Thus, using \eqref{23mageq4}, we have that  for any $\tau$ positive and small enough
$$\tau\bar\xi_i\in U$$
which contradicts  \eqref{23mageq3}.\\
Since $\left\langle \bar\xi_i,e_N\right\rangle=0$, we can write 
\begin{equation}\label{23mageq5}
\bar\xi_i=(\bar\xi'_i,0).
\end{equation} 
Moreover $\left\{\bar\xi'_1,\ldots,\bar\xi'_k\right\}$ is an orthonormal frame in $\R^{N-1}$.\\
Consider now, for  $i=1,\ldots,k$, the functions
$$g_i(\tau)=f(\tau\bar\xi'_i)\qquad\tau\in(-r, r).$$ 
  Using \eqref{23mageq4}-\eqref{24mageq1}-\eqref{23mageq3}-\eqref{23mageq5} we obtain that 
$$
g_i(\tau)\geq0=g(0)\qquad\forall \tau\in(-r,r).
$$
Hence for any $i=1,\ldots,k$ 
\begin{equation*}
g_i''(0)=\left\langle D^2f(0')\bar\xi'_i,\bar\xi'_i \right\rangle\geq0,
\end{equation*}
from which we conclude that 
\begin{equation*}
\begin{split}
\sum_{i=1}^k\kappa_{N-i}(x_0)&=\sup\left\{\sum_{i=1}^k\left\langle D^2f(0')\xi_i,\xi_i\right\rangle\;:\;\;\left\{\xi_1,\ldots,\xi_k\right\} \text{orthonormal set in $\R^{N-1}$}\right\}\\
&\geq\sum_{i=1}^k\left\langle D^2f(0')\bar\xi'_i,\bar\xi'_i\right\rangle\geq0.
\end{split}
\end{equation*}

\end{proof}

\section{The case of $\I_k$}
As it was mentioned in the introduction, for the operator $\I_k$ the right condition on the positivity set is $\mathcal{G}_{\Omega,U,k}$, see Definition \ref{def2}. We start the section giving an example
of a bounded open set $U\subset\mathbb R^3$ which satisfies the condition $G_{\mathbb R^3,U,2}$, but not $\mathcal{G}_{\mathbb R^3,U,2}$. Such example can be easily generalized to $\mathbb R^N$ and $k<N$. 
\begin{example}{\rm
Let
$$
U=U_1\cup U_2,
$$
where $U_1$ and $U_2$ are the two unit and open balls defined by 
\begin{equation*}
\begin{split}
U_1&=\left\{(x,y,z)\in\mathbb R^3\,:\;x^2+y^2+z^2<1\right\}\\
U_2&=\left\{(x,y,z)\in\mathbb R^3\,:\;x^2+(y-4)^2+z^2<1\right\}.
\end{split}
\end{equation*}
Let us first show that condition $\mathcal{G}_{\mathbb R^3,U,2}$ is not satisfied. For this we consider, for $\varepsilon$ positive and small enough,  the point $P_\varepsilon=(0,\varepsilon,\sqrt{1-\varepsilon^2})\notin U$. We claim that every two dimensional plane $\pi$ passing through $P_\varepsilon$ has nonempty intersection with $U$. This is obvious if $\pi$ is not the tangent plane to the unit sphere $\partial U_1=\left\{x^2+y^2+z^2=1\right\}$, since $P_\varepsilon\in\partial U_1$. On the other hand if  $\pi$ is the tangent plane to $\partial U_1$ at the point $P_\varepsilon$, it is not difficult to see that 
$$
\left(0,4,\sqrt{1-\varepsilon^2}-\frac{\varepsilon}{\sqrt{1-\varepsilon^2}}(4-\varepsilon)\right)\in\pi\cap U_2
$$
for any $\varepsilon$ positive and sufficiently small. Hence $\mathcal{G}_{\mathbb R^3,U,2}$ is not fulfilled at $P_\varepsilon$.

Now we prove that $U$ has the property ${G}_{\mathbb R^3,U,2}$. Fix $(x_0,y_0,z_0)\notin U$. 
If there exists a two-dimensional linear space $V\subset \R^3$ such that $\left((x_0,y_0,z_0)+V\right)\cap U=\emptyset,$ then we are done. 
Suppose now that for any linear space $V$, with $\dim V=2$, $$\left((x_0,y_0,z_0)+V\right)\cap U\neq\emptyset.$$ 
In particular, this implies that 
\begin{equation}\label{2806eq1}
x_0\in(-1,1)\;,\quad y_0\in(0,1)\cup(3,4)\;,\quad z_0\in(-1,1)\,.
\end{equation}
Otherwise, it is obvious that there exists a two-dimensional linear space $W$ such that $$\left((x_0,y_0,z_0)+W\right)\cap U=\emptyset\,.$$
By symmetry, we suppose in the following that $y_0\in(0,1)$. A similar argument, with obvious changes, holds in the case $y_0\in(3,4)$
We can pick $V$ in such a way $$\left((x_0,y_0,z_0)+V\right)\cap U_1=\emptyset\,. $$ 
We then have that 
$\left((x_0,y_0,z_0)+V\right)\cap U_2\neq\emptyset\,.$ Note that $((x_0,y_0,z_0)+V)\cap U_2$ is a 
disk and that 
$$\text{dist}\left((x_0,y_0,z_0),((x_0,y_0,z_0)+V)\cap U_2\right)\geq |(x_0,y_0,z_0)-(0,4,0)|-1>2\,.$$ 
After a rigid transformation we can also assume that $(x_0,y_0,z_0)=(0,0,0)$, $V$ is spanned by $e_1=(1,0,0)$ and $e_2=(0,1,0)$ and that $$\left((x_0,y_0,z_0)+V\right)\cap U_2=\left\{(x,y,0)\in\mathbb R^3\,:\;(x-\alpha)^2+y^2<\rho^2\right\}$$
for some $\alpha>2$ and $\rho\in(0,1]$.  
Now, it is immediate to see that 
$$
r_1:\begin{cases}
x=y \\
z=0
\end{cases}\;,\qquad r_2:\begin{cases}
x=-y \\
z=0
\end{cases}
$$
are two orthogonal lines passing through $(x_0,y_0,z_0)$ and such that $$
r_1\cap U=r_2\cap U=\emptyset.
$$
Since $(x_0,y_0,z_0)\notin U$ is arbitrary, we conclude that $U$ has the property ${G}_{\mathbb R^3,U,2}$.
}
\end{example}

\begin{teo}\label{th5}
Assume that $u\in LSC(\R^N)$ is a bounded and nonnegative (in $\R^N$) viscosity supersolution of 
$$
\I_ku=0\quad\text{in $\Omega$}
$$
such that $\displaystyle\min_\Omega u=0$. Then its positivity set $U=\left\{x\in\R^N\,:\;u(x)>0\right\}$ satisfies $\mathcal{G}_{\Omega,U,k}$.
\end{teo}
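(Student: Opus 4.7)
My plan is to mimic the proof of Theorem~\ref{th1}, substituting ``$k$ orthonormal directions, summed'' with ``a $k$-dimensional subspace, with the sup taken over its unit vectors''.

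First, I would take any $x_0\in\Omega\setminus U$ (so that $u(x_0)=0$) and test $u$ from below at $x_0$ by the cutoff $\psi_n$ of~\eqref{24mageq2}. The viscosity inequality $\I_k\psi_n(x_0)\le 0$, combined with the infimum-over-subspaces structure in~\eqref{eq3}, produces for each $n\in\N$ a $k$-dimensional subspace $V_n\subset\R^N$ with
\begin{equation*}
\sup_{\xi\in V_n,\,|\xi|=1}\,C_s\int_{1/n}^{+\infty}\frac{u(x_0+\tau\xi)+u(x_0-\tau\xi)}{\tau^{1+2s}}\,d\tau\,\le\,\frac{1}{n}.
\end{equation*}

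Next, I represent each $V_n$ by an orthonormal basis $\{\xi_1(n),\dots,\xi_k(n)\}$ and, by compactness of the unit sphere, pass to a subsequence along which $\xi_i(n)\to\bar\xi_i$ for every $i$; the limit vectors form an orthonormal basis of a $k$-dimensional subspace $V\subset\R^N$. The essential step is to transfer the supremum inequality above to every unit vector of $V$. Given a unit $\xi\in V$, write $\xi=\sum_{i=1}^k c_i\bar\xi_i$ with $\sum c_i^2=1$ and set $\xi_n:=\sum_{i=1}^k c_i\,\xi_i(n)$; orthonormality of $\{\xi_i(n)\}$ forces $|\xi_n|=1$ with $\xi_n\in V_n$, and clearly $\xi_n\to\xi$. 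Feeding $\xi_n$ into the display above and applying Fatou's lemma together with the lower semicontinuity of $u$, exactly as in the passage from~\eqref{eq4} to~\eqref{eq6}, yields
\begin{equation*}
\int_0^{+\infty}\frac{u(x_0+\tau\xi)+u(x_0-\tau\xi)}{\tau^{1+2s}}\,d\tau\,\le\,0.
\end{equation*}
Since $u\ge 0$ in $\R^N$, the integrand vanishes for almost every $\tau>0$, and then for every $\tau$ by lower semicontinuity of $u$ along the line through $x_0$ in direction $\xi$. Letting $\xi$ range over the unit sphere of $V$ gives $x_0+v\notin U$ for every $v\in V$, which is exactly condition $\mathcal G_{\Omega,U,k}$ at $x_0$; as $x_0\in\Omega\setminus U$ is arbitrary, the theorem follows.

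The main difficulty I anticipate lies precisely in the passage to the limit of the supremum: a unit vector of the limit subspace $V$ need not, a priori, be the limit of unit vectors of $V_n$. The orthonormal-basis device resolves this cleanly, since any linear combination whose coefficient vector has unit Euclidean norm produces a unit vector in $V_n$ and in $V$ simultaneously, so the same Fatou/lower-semicontinuity argument that worked for a single direction in Theorem~\ref{th1} works uniformly along the whole limit subspace here.
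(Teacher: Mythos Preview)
Your proposal is correct and follows essentially the same route as the paper's proof. The only minor difference is in how you produce unit vectors $\xi_n\in V_n$ converging to a given unit $\xi\in V$: the paper orthogonally projects $\xi$ onto $V_n$ and then normalizes, whereas you transfer the coordinates of $\xi$ from the limit basis $\{\bar\xi_i\}$ to $\{\xi_i(n)\}$, which directly yields a unit vector and is arguably cleaner.
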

\begin{proof}
Let $x_0\in\Omega\backslash U$. We proceed as in the proof of Theorem \ref{th1} testing $u$ from below at $x_0$ by the sequence of function $\psi_n$ defined in \eqref{24mageq2}. Hence for any $n\in\mathbb N$, there exists a $k$-dimensional linear space $V=V(n)$ such that 
\begin{equation}\label{24mageq3}
C_s\sup_{\xi\in V,\;|\xi|=1}\int\limits_{0}^{+\infty}\frac{\left[u(x_0+\tau\xi)+u(x_0-\tau\xi)\right]\chi_{\left(\frac1n,+\infty\right)}(\tau)}{\tau^{1+2s}}\,d\tau\leq\frac1n\,.
\end{equation}
Choose an orthonormal basis $\left\{\xi_1(n),\ldots,\xi_k(n)\right\}$ of $V$. Passing to a subsequence, we can also assume that 
$$
\lim_{n\to+\infty}\xi_i(n)=\bar\xi_i\quad\;i=1,\ldots,k.
$$
Let $\bar V=\text{span}\left\{\bar\xi_1,\ldots,\bar\xi_k\right\}$. We have $\dim \bar V=k$. We claim that $\bar V$ satisfies also condition $(ii)$ in Definition \ref{def2}. For this we shall prove that for any fixed unit vector $\xi\in\bar V$, then
\begin{equation}\label{24mageq4}
x_0+\tau\xi\notin U\quad\;\forall \tau\in\R.
\end{equation}
Let $\xi\in\bar V$ be such that $|\xi|=1$ and let 
$$
v(n):=\sum_{i=1}^k\left\langle \xi_i(n),\xi\right\rangle\xi_i(n).
$$
Note that $v(n)\in V=V(n)$ and that 
\begin{equation}\label{24mageq5}
\lim_{n\to+\infty}v(n)=\xi.
\end{equation}
Hence, for any $n$ sufficiently large, $|v(n)|>0$.
Using \eqref{24mageq3} with $\xi(n)=\frac{v(n)}{|v(n)|}\in V$ we have 
$$
C_s\int\limits_{0}^{+\infty}\frac{\left[u(x_0+\tau\xi(n))+u(x_0-\tau\xi(n))\right]\chi_{\left(\frac1n,+\infty\right)}(\tau)}{\tau^{1+2s}}\,d\tau\leq\frac1n\,.
$$
Passing to the limit  in the above inequality as $n\to+\infty$ and using Fatou's Lemma, $u\in LSC(\R^N)$ and \eqref{24mageq5} we obtain
$$
\int\limits_{0}^{+\infty}\frac{u(x_0+\tau\xi)+u(x_0-\tau\xi)}{\tau^{1+2s}}\,d\tau\leq0\,.
$$
The assumption $u\geq0$ in $\R^N$ yields $u(x_0+\tau\xi)=0$ for any $\tau\in\R$. Thus \eqref{24mageq4} holds.
\end{proof}

The proofs of the following three theorems can be obtained, with minor changes, as those of Theorems \ref{th2}-\ref{th3}-\ref{th4}, see also Remark \ref{rmk2}. 

\begin{teo}\label{th6}
Assume that $U$ is an open set satisfying  ${\mathcal G}_{\Omega,U,k}$. Then there exists a nonnegative and bounded (in $\R^N$) supersolution $u\in LSC(\R^N)$ of 
$$
\I_ku=0\quad\text{in $\Omega$}
$$
such that 
$$
\min_\Omega u=0\qquad\text{and}\qquad U=\left\{x\in\R^N\,:\;u(x)>0\right\}.
$$
\end{teo}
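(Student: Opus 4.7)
The plan is to mimic the proof of Theorem \ref{th2} almost verbatim, taking $u=\chi_U$, and tracking the single point where the argument must be adjusted because $\I_k$ involves a supremum (rather than a sum) along a $k$-dimensional object. Since $U$ is open, $u=\chi_U$ lies in $LSC(\R^N)$, is bounded between $0$ and $1$, has positivity set exactly $U$, and satisfies $\min_\Omega u=0$ because $\Omega\setminus U\neq\emptyset$ by $\mathcal{G}_{\Omega,U,k}$. So the nontrivial content is checking that $u$ is a viscosity supersolution of $\I_k u=0$ in $\Omega$.

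For $x_0\in\Omega\cap U$ the argument is classical: openness of $U$ gives $u\equiv 1$ on a neighborhood of $x_0$, while $0\le u\le 1$ on all of $\R^N$, so every integral $\I_\xi u(x_0)$ is well-defined and nonpositive. For any $k$-dimensional $V$, $\sup_{\xi\in V,|\xi|=1}\I_\xi u(x_0)\le 0$, and then $\I_k u(x_0)\le 0$ as an infimum of nonpositive quantities. For $x_0\in\Omega\setminus U$ the function $u$ may be discontinuous, so I would follow the test-function approach of Theorem \ref{th2}: take $\varphi\in C^2(\overline{B_\delta(x_0)})$ with $\varphi(x_0)=0$ and $\varphi\le u$ on $B_\delta(x_0)$, and form
\begin{equation*}
\psi(x)=\begin{cases}\varphi(x)&\text{if }x\in B_\delta(x_0),\\ u(x)&\text{otherwise.}\end{cases}
\end{equation*}
The task is to show $\I_k\psi(x_0)\le 0$.

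Here is the only substantive change from Theorem \ref{th2}: condition $\mathcal{G}_{\Omega,U,k}$ supplies a $k$-dimensional linear space $\bar V$ (not merely an orthonormal frame) such that $x_0+v\notin U$ for \emph{every} $v\in\bar V$. Consequently, for \emph{every} unit vector $\xi\in\bar V$ the whole line $\{x_0\pm\tau\xi:\tau\ge 0\}$ lies in $\R^N\setminus U$, so $u\equiv 0$ along it and $\varphi\le u=0$ along its portion inside $B_\delta(x_0)$; this forces
\begin{equation*}
\I_\xi\psi(x_0)=C_s\int_0^\delta\frac{\varphi(x_0+\tau\xi)+\varphi(x_0-\tau\xi)}{\tau^{1+2s}}\,d\tau\le 0.
\end{equation*}
Taking the supremum over unit $\xi\in\bar V$ preserves the inequality, and then using $\bar V$ as a competitor in the infimum defining $\I_k$ yields $\I_k\psi(x_0)\le 0$, as required.

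The main conceptual point — the sole place where $\mathcal{G}$ enters rather than the weaker $G$ — is precisely the matching of quantifiers: the operator $\I_k$ demands control of $\I_\xi$ \emph{for every} unit $\xi$ in some $k$-plane, and $\mathcal{G}_{\Omega,U,k}$ is designed to furnish a $k$-plane along which $u$ vanishes in every direction. Once this is recognized no new analytic obstacle appears; the remaining bookkeeping (measurability, integrability near $0$ and at $\infty$, the usual LSC envelope $\psi$) is identical to that in Theorems \ref{th2} and \ref{th3}, and Remark \ref{rmk2} can be invoked if one later wants a Lipschitz version with prescribed positivity set as in the analogue of Theorem \ref{th3}.
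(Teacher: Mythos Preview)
Your proposal is correct and follows exactly the approach the paper indicates: it explicitly states that the proof of Theorem~\ref{th6} is obtained from that of Theorem~\ref{th2} with minor changes, and you have identified precisely the one change needed, namely that $\mathcal{G}_{\Omega,U,k}$ supplies a full $k$-plane $\bar V$ so that $\I_\xi\psi(x_0)\le 0$ holds for \emph{every} unit $\xi\in\bar V$, which is what the $\sup$ in the definition of $\I_k$ requires.
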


\begin{teo}\label{th7}
Assume that $U$ is an open subset of $\R^N$ that satisfies ${\mathcal G}_{\R^N,U,k}$. Then there exists a nonnegative and bounded viscosity supersolution $u\in\mathrm{Lip}(\R^N)$ of 
\begin{equation*}
\I_ku=0\quad\text{in $\R^N$}
\end{equation*}
such that $U=\left\{x\in\R^N\,:\;u(x)>0\right\}$.
\end{teo}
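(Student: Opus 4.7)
The plan is to adapt the proof of Theorem \ref{th3} to the operator $\I_k$, with the key adjustment being how the geometric condition ${\mathcal G}_{\R^N,U,k}$ is exploited at a point outside $U$. To accommodate a possibly unbounded $U$, I would take, as suggested in Remark \ref{rmk2},
\[
u(x)=\min\bigl\{\mathrm{dist}(x,\R^N\setminus U),\,1\bigr\},
\]
which is bounded, nonnegative, $1$-Lipschitz on $\R^N$, and satisfies $U=\{x\in\R^N:u(x)>0\}$. Note that $\R^N\setminus U\neq\emptyset$ is guaranteed by the very definition of ${\mathcal G}_{\R^N,U,k}$, so the distance is well defined.

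To verify the supersolution inequality at an arbitrary $x_0\in\R^N$, I would pick a test function $\varphi\in C^2(\overline{B_\delta(x_0)})$ touching $u$ from below at $x_0$ and form the glued function $\psi$ as in the proof of Theorem \ref{th3}. If $u(x_0)=1$ then locally $\varphi\leq u\leq 1=\varphi(x_0)$, so $\varphi$ attains a local maximum at $x_0$ and the outer integrand is controlled by $u(x_0)$; hence $\I_\xi\psi(x_0)\leq 0$ for every direction $\xi$, and the conclusion is immediate. In the interesting case $u(x_0)<1$, the distance is realized by some $y_0\in\R^N\setminus U$ with $u(x_0)=|x_0-y_0|$, and the translation argument of Theorem \ref{th3} yields, for $\phi(y):=\varphi(y+x_0-y_0)$, the local maximum property
\[
\phi(y)\leq\phi(y_0)\qquad\forall y\in B_\delta(y_0)\cap(\R^N\setminus U).
\]

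The crucial, and essentially only, difference with Theorem \ref{th3} is the way ${\mathcal G}_{\R^N,U,k}$ is applied at $y_0$: it yields a $k$-dimensional linear subspace $\bar V$ with $y_0+v\notin U$ for every $v\in\bar V$, so the whole line $\{y_0+\tau\xi:\tau\in\R\}$ lies in $\R^N\setminus U$ for \emph{every} unit vector $\xi\in\bar V$, not just along a preselected orthonormal frame. For any such $\xi$, the inequality $\phi(y_0\pm\tau\xi)\leq\phi(y_0)$ on $[0,\delta)$ gives
\[
\int_{0}^{\delta}\frac{\varphi(x_0+\tau\xi)+\varphi(x_0-\tau\xi)-2\varphi(x_0)}{\tau^{1+2s}}\,d\tau\leq 0,
\]
while choosing the competitor $y=y_0\pm\tau\xi\in\R^N\setminus U$ in $u(z)\leq\min\{|z-y|,1\}$ at $z=x_0\pm\tau\xi$ forces $u(x_0\pm\tau\xi)\leq|x_0-y_0|=u(x_0)$, hence
\[
\int_{\delta}^{+\infty}\frac{u(x_0+\tau\xi)+u(x_0-\tau\xi)-2u(x_0)}{\tau^{1+2s}}\,d\tau\leq 0.
\]

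Summing these two contributions gives $\I_\xi\psi(x_0)\leq 0$ for every unit $\xi\in\bar V$, so
\[
\I_k\psi(x_0)\leq\sup_{\xi\in\bar V,\,|\xi|=1}\I_\xi\psi(x_0)\leq 0,
\]
which is the required viscosity supersolution inequality. The main conceptual point, rather than a true obstacle, is to recognize that ${\mathcal G}_{\R^N,U,k}$ is phrased in terms of an entire $k$-dimensional linear space rather than a $k$-frame, and that this stronger information is precisely what is needed to bound $\I_\xi\psi(x_0)$ uniformly for \emph{all} unit $\xi$ in a common $k$-dimensional subspace, which is exactly the quantity that appears in the definition \eqref{eq3} of $\I_k$.
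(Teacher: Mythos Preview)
Your proposal is correct and follows precisely the approach indicated by the paper, which states that the proof is obtained from that of Theorem~\ref{th3} with minor changes, together with Remark~\ref{rmk2} to handle possibly unbounded $U$. You have correctly identified and executed the one substantive modification: at the nearest point $y_0\in\R^N\setminus U$, the hypothesis $\mathcal{G}_{\R^N,U,k}$ provides an entire $k$-dimensional subspace $\bar V$ along which $y_0+\bar V\subset\R^N\setminus U$, so that $\I_\xi\psi(x_0)\leq 0$ holds for \emph{every} unit $\xi\in\bar V$, matching the $\inf_{\dim V=k}\sup_{\xi\in V}$ structure of $\I_k$.
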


\begin{teo}\label{th8}
Let $U$ be an open set verifying ${\mathcal G}_{\Omega,U,k}$ and assume that $\Omega\cap \partial U$ is a smooth hypersurface. 
Then
\begin{equation*}
\kappa_{N-k}(x)\geq0 \quad\forall x\in\Omega\cap\partial U.
\end{equation*}
\end{teo}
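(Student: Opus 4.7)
The plan is to follow the template of Theorem \ref{th4}, with a Courant--Fischer min-max argument replacing the summation step at the end. Fix $x_0\in\Omega\cap\partial U$; since $U$ is open, $x_0\notin U$, hence $x_0\in\Omega\setminus U$. After translation and an orthogonal transformation assume $x_0=0$ and, for some $r>0$,
\begin{equation*}
U\cap B_r=\left\{x=(x',x_N)\in B_r:x_N>f(x')\right\},\quad f\in C^2(B_r(0')),\ f(0')=0,\ Df(0')=0'.
\end{equation*}
Under this normalization the principal curvatures $\kappa_1(0)\le\ldots\le\kappa_{N-1}(0)$ of $\Omega\cap\partial U$ at $0$ are exactly the eigenvalues $\mu_1\le\ldots\le\mu_{N-1}$ of $D^2f(0')$.

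Next I would apply $\mathcal{G}_{\Omega,U,k}$ at $x_0=0$ to produce a linear subspace $V\subset\R^N$ of dimension $k$ with $\tau\xi\notin U$ for every $\xi\in V$ and $\tau\in\R$. The first claim is that $V\subseteq e_N^\perp$. If not, there is a unit vector $\xi\in V$ with $\langle\xi,e_N\rangle\neq 0$, and replacing $\xi$ by $-\xi$ if necessary we may assume $\langle\xi,e_N\rangle>0$. Since $Df(0')=0'$, we have $f(\tau\xi')=o(\tau)$ as $\tau\to 0$, so for $\tau>0$ small enough $\tau\langle\xi,e_N\rangle>f(\tau\xi')$, which together with the local description of $U$ gives $\tau\xi\in U$, contradicting the choice of $V$. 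Hence every $\xi\in V$ has the form $(\xi',0)$ and $V'=\{\xi'\in\R^{N-1}:(\xi',0)\in V\}$ is a $k$-dimensional subspace of $\R^{N-1}$.

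For each unit $\xi'\in V'$ the one-variable function $g(\tau):=f(\tau\xi')$ satisfies $g(0)=0$ and, since $(\tau\xi',0)\notin U$ forces $f(\tau\xi')\ge 0$ (by the local defining inequality), we get $g(\tau)\ge 0=g(0)$ on $(-r,r)$. Thus $g''(0)=\langle D^2f(0')\xi',\xi'\rangle\ge 0$, so $D^2f(0')$ is positive semidefinite on the $k$-dimensional subspace $V'$.

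The final step, which is the only one where the argument genuinely departs from Theorem \ref{th4}, is to convert this to the desired inequality on the $(N-k)$-th eigenvalue via the max-min characterization
\begin{equation*}
\mu_{N-k}=\max_{\substack{W\subset\R^{N-1}\\ \dim W=k}}\ \min_{\substack{\xi\in W\\|\xi|=1}}\langle D^2f(0')\xi,\xi\rangle.
\end{equation*}
Taking $W=V'$ yields $\kappa_{N-k}(x_0)=\mu_{N-k}\ge 0$. The ``main obstacle'' is really only a bookkeeping one: making sure the indices in the Courant--Fischer formula line up correctly with the numbering of the principal curvatures in $\R^{N-1}$, which consistency-checks cleanly against the two extreme cases $k=1$ (recovering $\kappa_{N-1}\ge 0$, in agreement with Theorem \ref{th4}) and $k=N-1$ (giving $\kappa_1\ge 0$, hence local convexity of $U$, in agreement with Theorem \ref{th9}).
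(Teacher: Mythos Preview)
Your proof is correct and is precisely the ``minor changes'' adaptation of the proof of Theorem~\ref{th4} that the paper has in mind (the paper does not write out a separate proof for Theorem~\ref{th8}). The only substantive modification is the final step, where the variational identity $\sum_{i=1}^k\kappa_{N-i}=\sup_{\{\xi_i\}}\sum_i\langle D^2f(0')\xi_i,\xi_i\rangle$ from Theorem~\ref{th4} is replaced by the Courant--Fischer max--min characterization $\kappa_{N-k}=\max_{\dim W=k}\min_{|\xi|=1,\,\xi\in W}\langle D^2f(0')\xi,\xi\rangle$, exploiting that $\mathcal G_{\Omega,U,k}$ supplies an entire $k$-dimensional subspace $V'$ on which the Hessian is nonnegative, rather than merely $k$ orthonormal directions; your index check against the cases $k=1$ and $k=N-1$ is apt.
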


The geometric condition ${\mathcal G}_{\R^N,U,N-1}$ implies the convexity of any connected component of $U$, as shown in the next

\begin{teo}\label{th9}
Let $U\subset\R^N$ be open set satisfying condition ${\mathcal G}_{\R^N,U,N-1}$. Then any connected component of $U$ is a convex set.
\end{teo}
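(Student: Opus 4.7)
The plan is to argue by contradiction: suppose some connected component $C$ of $U$ is not convex, and use condition ${\mathcal G}_{\R^N,U,N-1}$ to produce a separating hyperplane that disconnects $C$.

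First I would pick $a,b\in C$ with $[a,b]\not\subset C$, and define
\[
t^*=\inf\{t\in[0,1]\,:\;a+t(b-a)\notin C\},\qquad c^*=a+t^*(b-a).
\]
Since $C$ is open and contains $a$ and $b$, one has $t^*\in(0,1)$; since the complement of $C$ in $[0,1]$ is closed, $c^*\notin C$; and by definition of infimum, $a+t(b-a)\in C$ for every $t\in[0,t^*)$. The first routine step is to show $c^*\notin U$: otherwise $c^*$ would lie in some other component $C'$ of $U$, and the openness of $C'$ together with the fact that points just before $c^*$ on the segment lie in $C$ would force $C=C'$, a contradiction.

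The core of the argument now applies the hypothesis at $c^*\in\R^N\setminus U$: there exists an $(N-1)$-dimensional linear space $V$ with $c^*+V\subset\R^N\setminus U$. Setting $H=c^*+V$, this is a hyperplane through $c^*$ that does not meet $U$, hence does not meet $C$. Because $C$ is connected and $\R^N\setminus H$ has exactly two open half-space components $H^+$ and $H^-$, the component $C$ must lie entirely in one of them; in particular $a$ and $b$ are in the same open half-space, say $H^+$. But $H^+$ is convex, so the segment $[a,b]$ is contained in $H^+$, contradicting $c^*\in[a,b]\cap H$.

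I do not expect any serious obstacle: the proof is a clean combination of the elementary fact that an open hyperplane-free connected set lies in a single half-space with the geometric content of ${\mathcal G}_{\R^N,U,N-1}$. The only subtlety worth stating carefully is step three, ruling out the possibility that the first exit point $c^*$ from $C$ lies in a different component of $U$ rather than in $\R^N\setminus U$; once this is handled, the hyperplane $H$ provided by the hypothesis does all the work.
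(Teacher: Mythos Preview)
Your proof is correct and follows essentially the same route as the paper's: locate a point on the segment $[a,b]$ lying outside $U$, invoke ${\mathcal G}_{\R^N,U,N-1}$ to obtain a hyperplane through that point disjoint from $U$, and derive a contradiction with the connectedness of the component. The paper bypasses your first-exit-point construction by observing directly that $[a,b]\subset U$ would force $[a,b]\subset C$ (since $[a,b]$ is connected and meets $C$), and then concludes by noting that $a$ and $b$ must lie in opposite open half-spaces, disconnecting $C$; these are only cosmetic differences from your argument.
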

\begin{proof} Let $U_0$ be a connected component of $U$. Note that $U_0$ is an open subset 
of $U$ since $U$ is locally connected. 

To the contrary that 
$U_0$ is a convex set, we suppose that $U_0$ is not convex, which implies that 
there are two distinct points $x,y\in U_0$ such that  
the line segment $[x,y]:=\{tx+(1-t)y : t\in[0,1]\}$ is  not contained in $U_0$. 
This implies that $[x,y]$ is not contained in $U$. Indeed, if we suppose that 
$[x,y]\subset U$, then $[x,y]$ is a connected subset of $U$ and intersects $U_0$, 
and therefore, $U_0\cup [x,y]$ is a connected subset of $U$, which yields a contradiction  that 
$[x,y]\subset U_0$. Thus, we find that $[x,y]$ is not contained in $U$. We can choose 
$\lambda\in(0,1)$ so that $z:=\lambda x+(1-\lambda)y\not\in U$. By the property ${\mathcal G}_{\R^N,U,N-1}$,
we can choose $(N-1)$-dimensional linear subspace $V$ of $\R^N$ such that 
$(z+V)\cap U=\emptyset$. Select $\nu\in\R^N\setminus\{0\}$ so that 
$z+V=\{p\in\R^N : \langle\nu, p-z\rangle=0\}$ and set 
\[
H^+=\{p\in\R^N : \langle \nu, p-z\rangle>0\} \ \ \text{ and } \ \ H^-=\{p\in\R^N : \langle 
\nu, p-z\rangle<0\}.  
\]
Observe that 
\begin{equation} \label{disjoint-union}
U_0=(H^+\cap U_0)\ \cup\ (H^-\cap U_0),
\end{equation}
that the right hand side of \eqref{disjoint-union} is a disjoint union of two open subsets of $U$,
and that either $x\in H^+$ and $y\in H^-$, or $x\in H^-$ and $y\in H^+$, which 
assures that both $H^+\cap U_0$ and $H^-\cap U_0$ are nonempty. 
Hence, \eqref{disjoint-union} contradicts the connectedness of $U_0$, which
completes the proof. 
\end{proof}
As a consequence of Theorems \ref{th5} and \ref{th9}, we have the following

\begin{teo}\label{th10}
Let $u\in LSC(\R^N)$ be a bounded and nonnegative supersolution of \begin{equation}\label{24mageq8}
\I_{N-1}u\leq0\quad\text{in $\R^N$}
\end{equation} and satisfy $\displaystyle\min_{\Omega}u=0$. Then any connected components of its positivity set is a convex set.
\end{teo}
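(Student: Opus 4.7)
The plan is essentially to concatenate Theorem \ref{th5} with Theorem \ref{th9}, since the statement is a direct consequence of what has already been proved. First I would invoke Theorem \ref{th5}, taking $\Omega=\R^N$ and $k=N-1$, to conclude that the positivity set $U=\{x\in\R^N:u(x)>0\}$ satisfies the geometric condition $\mathcal{G}_{\R^N,U,N-1}$. The hypotheses translate verbatim: $u\in LSC(\R^N)$ is bounded and nonnegative, is a viscosity supersolution of $\I_{N-1}u=0$ in $\R^N$, and the condition $\displaystyle\min_{\R^N} u=0$ provides a point of $\R^N\setminus U$ so that $\mathcal{G}_{\R^N,U,N-1}$ is nontrivially satisfied.

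Second, I would apply Theorem \ref{th9} directly to the open set $U$. Its assumption is precisely $\mathcal{G}_{\R^N,U,N-1}$, which has just been established, and its conclusion is exactly the convexity of each connected component of $U$. This yields the result with no additional calculation.

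The only minor case to dispatch is $U=\emptyset$, for which the assertion about connected components is vacuous; otherwise the standing convention fixed at the beginning of Section~2 applies. Since the proof is a clean corollary of results already in hand, I do not foresee a genuine obstacle: the conceptual work was done in proving Theorem \ref{th5} (extracting the $k$-dimensional subspace $\bar V$ via a compactness argument on the testing directions) and in Theorem \ref{th9} (using $(N-1)$-dimensional slabs to disconnect a non-convex component), and Theorem \ref{th10} simply packages them together in the critical case $k=N-1$.
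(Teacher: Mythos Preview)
Your proposal is correct and matches the paper's approach exactly: the paper states Theorem~\ref{th10} as an immediate consequence of Theorems~\ref{th5} and~\ref{th9}, which is precisely the concatenation you describe. Your handling of the trivial case $U=\emptyset$ and the identification $\Omega=\R^N$ are appropriate.
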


\begin{remark}{\rm
Theorem \ref{th10} is not true for supersolutions of 
\begin{equation}\label{27giueq1}
\I_ku=0\quad\quad\text{in $\R^N$}
\end{equation}
when $k<N-1$ and $N\geq3$. As example consider 
$$
U=\left\{x=(x_1,\ldots,x_N)\in\R^N\,:\;1< x_1^2+x_2^2<2\right\}.
$$
Then $U$ has the property ${\mathcal G}_{\R^N,U,k}$ for any $k\leq N-2$. For this it is sufficient to consider any $k$-dimensional linear space $V\subseteq\left\{(0,0,x_3,\ldots,x_N)\right\}$ in order to fulfill condition $(ii) $ of Definition \ref{def2}. \\
By Theorem \ref{th6}, or Theorem \ref{th7}, there exists a bounded and nonnegative viscosity supersolution of \eqref{27giueq1} whose positivity set coincides with  $U$. On the other hand it is clear that $U$ is not convex.
}
\end{remark}


\begin{thebibliography}{00}
\bibitem{BI}
Barles, G. and Imbert, C. {\em Second-order Elliptic Integro-Differential Equations: Viscosity Solutions' Theory Revisited.} Ann. Inst. H. Poincar\'e Anal. Non Lin\'eaire, Vol. 25 (2008) no. 3, 567-585.
\bibitem{BC} R. F. Bass, Zhen-Qing Chen,\emph{ Regularity of Harmonic functions for a class of singular stable-like processes}, Math. Z. 266 (2010) 489-503.
\bibitem{BGI} I. Birindelli, G. Galise, H. Ishii, \emph{ Positivity sets of supersolutions of degenerate elliptic equations and the strong maximum principle},   Trans. Amer. Math. Soc.  374 (2021), no. 1, 539-564.
\bibitem{BGS} I. Birindelli, G. Galise, D. Schiera, \emph{Maximum principles and related problems for a class of nonlocal extremal operators} Ann. Mat. Pura Appl. (to appear) 
\bibitem{BGT} I. Birindelli, G. Galise, E. Topp, \emph{Fractional truncated Laplacians: representation formula, fundamental solutions and applications}, Nonlinear Differ. Equ. Appl. 29, 26 (2022). 
\bibitem{Bo}  J.-M. Bony,\emph{ Principe du maximum, in\'egalit\'e de Harnack et unicit\'e du probl\`eme de Cauchy pour les op\'erateurs elliptiques d\'eg\'en\'er\'es}, Ann. Inst. Fourier (Grenoble) 19 (1969) 277-304.
\bibitem{CS}
Caffarelli, L. and Silvestre, L. {\em Regularity Theory For Nonlocal Integro-Differential Equations.} Comm. Pure Appl. Math, Vol. 62 
(2009), no. 5, 597-638.
\bibitem{DQR} L. Del Pezzo, A. Quaas, J. Rossi, \emph{Fractional convexity},  Math. Ann. (to appear). 
\bibitem{EIQ} J. Endal, L. I. Ignat, F. Quiros, \emph{Large-time behaviour for anisotropic stable nonlocal diffusion problems with convection} arXiv 2207.01874v1
\bibitem{OS} A. M. Oberman and L. Silvestre, \emph{The Dirichlet problem for the convex envelope}, Trans. Amer. Math. Soc. 363 (2011), no. 11, 5871-5886.
\end{thebibliography}
\end{document}